\journal{Statistics and Probability Letters}
\newlength{\querylen}
\theoremstyle{plain}
\newtheorem{theorem}{Theorem}[section]
\newtheorem{proposition}[theorem]{Proposition}
\newtheorem{corollary}[theorem]{Corollary}
\theoremstyle{definition}
\newtheorem{definition}{Definition}[section]
\newtheorem{example}{Example}[section]
\theoremstyle{remark}
\newtheorem{remark}{Remark}[section]
\renewcommand{\P}{\mathbf{P}}
\newcommand{\Prob}[1]{\P\{#1\}}
\newcommand{\E}{\mathbf{E}}
\newcommand{\R}{\mathbb{R}}
\newcommand{\salg}{\mathfrak{F}}
\newcommand{\Sphere}{{\mathbb{S}}^{d-1}}
\DeclareMathOperator{\conv}{conv}
\begin{document}

\begin{frontmatter}

\title{Lift expectations of random sets} % and their applications}

\author[mad]{Marc-Arthur Diaye}
\ead{marc-arthur.diaye@univ-paris1.fr}

\author[gk]{Gleb A. Koshevoy}
\ead{koshevoy@cemi.rssi.ru}

\author[im]{Ilya Molchanov\corref{mycorrespondingauthor}}
\cortext[mycorrespondingauthor]{Corresponding author}
\ead{ilya.molchanov@stat.unibe.ch}

\address[mad]{CES, University Paris 1 Pantheon-Sorbonne, France}

\address[gk]{The Institute for Information Transmission Problems,
Bolshoy Karetny 19, 127051 Moscow, Russia}

\address[im]{University of Bern,
 Institute of Mathematical Statistics and Actuarial Science,
 Alpeneggstrasse 22,
 3012 Bern,
 Switzerland}

\begin{abstract}
  It is known that the distribution of an integrable random vector
  $\xi$ in $\R^d$ is uniquely determined by a $(d+1)$-dimensional
  convex body called the lift zonoid of $\xi$. This concept is
  generalised to define the lift expectation of random convex
  bodies. However, the unique identification property of distributions
  is lost; it is shown that the lift expectation uniquely identifies
  only one-dimensional distributions of the support function, and so
  different random convex bodies may share the same lift
  expectation. The extent of this nonuniqueness is analysed and it is
  related to the identification of random convex functions
  using only their one-dimensional marginals. Applications to
  construction of depth-trimmed regions and partial ordering of random
  convex bodies are also mentioned.
\end{abstract}

\begin{keyword}
  random set \sep selection expectation \sep lift zonoid
  \sep support function \sep risk measure \sep outlier
  \MSC[2010] 62H11 \sep 60D05 
\end{keyword}

\date{\today}

\end{frontmatter}

\section{Introduction}
\label{sec:introduction}

Probability theory provides numerous ways of identifying distributions
of random variables. Mentioning two (rather nontraditional) examples,
the distribution of an integrable random variable $\xi$ is uniquely
determined by its stop-loss transform $\E(t+\xi)_+$, $t\in\R$, where
$x_+$ denotes the positive part of $x\in\R$. Furthermore, \cite{hoe53}
showed that the sequence $\E\max(\xi_1,\dots,\xi_n)$, $n\geq1$, built
from i.i.d. copies of $\xi$ uniquely determines the distribution of
$\xi$.

Extensions of these identification results to random vectors are of a 
geometric nature and rely on the concept of zonoids and zonotopes.
Zonotopes form an important family of polytopes, which are defined as
Minkowski (elementwise) sums of a finite number of
segments. \emph{Zonoids} are convex sets that appear as limits in the
Hausdorff metric of a sequence of zonotopes, see
\cite[Sec.~3.5]{schn2}. In the plane, all (centrally) symmetric convex
sets are zonoids, while the symmetry is only a strictly necessary
condition in dimensions three and more.

Zonoids can also be described as expectations of random segments. For
this purpose, recall that a \emph{random convex closed set} $X$ in
$\R^d$ is a map from a probability space $(\Omega,\salg,\P)$ to the
family of convex closed sets in $\R^d$, which is measurable in the
sense that $\{\omega:\; X(\omega)\cap K\neq\emptyset\}\in\salg$ for
all compact sets $K$ in $\R^d$, see \cite[Def.~1.1.1]{mo1}. If $X$ is
almost surely compact and non-empty, it is called a \emph{random
  convex body}. The measurability condition is then equivalent to the
fact that the \emph{support function} of $X$
\begin{displaymath}
  h_X(u)=\sup\{\langle x,u\rangle:\; x\in X\},\qquad u\in\R^d,
\end{displaymath}
is a random function of $u$, where $\langle x,u\rangle$ is the scalar
product.  The distribution of a random convex body is uniquely
identified by the finite-dimensional distributions of its support
function.

A random convex closed set $X$ is said to be \emph{integrable}, if
there exists an integrable random vector $\xi$ such that $\xi\in X$
a.s. This vector $\xi$ is called an integrable \emph{selection} of
$X$. The \emph{selection expectation} $\E X$ is 
the closure of the set of expectations of all its integrable
selections, see \cite[Sec.~2.1]{mo1}. The closure is not needed if 
\begin{displaymath}
  \|X\|=\sup\{\|x\|:\; x\in X\}
\end{displaymath}
is an integrable random variable. Then $X$ is said to be
\emph{integrably bounded}.

If $X$ is integrably bounded,
then $h_X(u)$ is integrable for all $u$, and
\begin{displaymath}
  \E h_X(u)=h_{\E X}(u),\qquad u\in\R^d.
\end{displaymath}
If $X$ is integrable and $\xi$ is its integrable selection, then
$h_X(u)=\langle\xi,u\rangle+h_{X-\xi}(u)$, whence $h_X(u)$ is either
integrable or has the well-defined expectation $+\infty$. 

If $\xi$ is a random vector in $\R^d$, then the \emph{segment}
$[\boldsymbol{0},\xi]$ with end-points being the origin $\boldsymbol{0}$ and $\xi$ is a random
convex body. This random convex body is integrable even for a
nonintegrable $\xi$, since it contains the origin. Its expectation
$Z_\xi=\E[\boldsymbol{0},\xi]$ is called the \emph{zonoid} of $\xi$, see
\citep{mos02} and \citep{mo1}. Often, symmetrised versions of zonoids
are defined as expectation of the segment $[-\xi,\xi]$ and assuming
the integrability of $\xi$, see \cite[Sec.~3.5]{schn2}.

The zonoid does not uniquely determine the distribution of
$\xi$, for instance, it does not change if $\xi$ is multiplied by an
independent nonnegative random variable with expectation one. The
extent of such nonuniqueness is explored by
\cite{mol:sch:stuc13}. Despite the nonuniqueness, the zonoid
delivers some information about the linear dependence between $\xi$
and $\eta$, see \cite{dal:scar03}. 

It is possible to achieve the uniqueness by uplifting $\xi$ into
$\R^{d+1}$. For this, consider the segment $[(0,\boldsymbol{0}),(1,\xi)]$ in
$\R^{d+1}$, and call $\E[(0,\boldsymbol{0}),(1,\xi)]=\hat{Z}_\xi$ the \emph{lift
  zonoid} of $\xi$, see \citep{kos:mos98} and \citep{mos02}. Since
\begin{displaymath}
  h_{\hat{Z}_\xi}(u_0,u) = \E (u_0+\langle \xi,u\rangle)_+,\quad (u_0,u)\in\R^{d+1},
\end{displaymath}
the support function of the lift zonoid is the stop-loss transform of
$\langle \xi,u\rangle$. Thus, the lift zonoid of $\xi$ determines
uniquely the distribution of the scalar products $\langle
\xi,u\rangle$ for all $u\in\R^d$ and so the distribution of
$\xi$. This fact goes back to \cite{har81}, was independently proved
by \cite{kos:mos98}, and further gave rise to numerous applications in
multivariate analysis, see \citep{mos02}.

This paper presents an extension of the lift zonoid concept for random
convex bodies. Section~\ref{sec:lift-zonoid-random} defines the
required lifting that gives rise to the corresponding expected
sets. In general, such a set is no longer a zonoid in the geometric
sense of \cite[Sec.~3.5]{schn2}, and so we call it \emph{lift
  expectation}. It is shown that the lift expectation of an integrable
random convex body $X$ characterises the distributions of the support
function of $X$ in any single direction $u$ such that $h_X(u)$ is
integrable. Equivalently, the lift expectation embodies the
information contained in the single marginals of a random convex
function.

Examples of lift expectations are provided in
Section~\ref{sec:prop-exampl-lift}.  
Section~\ref{sec:uniq-rand-sets} relates the uniqueness issue 
to the multivariate comonotonicity of the support
function. Section~\ref{sec:discr-distr} deals with random sets having
at most a finite number of realisations. Section~\ref{sec:mult-extens}
discusses an extension of this concept for $n$-tuples of random sets.
A numerical example is given in Section~\ref{sec:numerical-example}.

The range of possible applications of the lift expectation is similar
to those well-established for lift zonoids, see \citep{mos02}, e.g.,
to assessing the depth of set-valued observations and stochastic
ordering of random convex bodies. In particular, the lift expectation
can be used to identify outliers in samples of random convex bodies
--- such samples arise in applications to partially identified
problems in econometrics, see \citep{mol:mol18}.  A relation to risk
measures is mentioned in Example~\ref{ex:seg}.

\section{Lift expectation of a random set}
\label{sec:lift-zonoid-random}

\subsection{Univariate distributions of the support function}
\label{sec:defin-univ-distr}

Let $X$ be a random convex body in $\R^d$. Uplift it to $\R^{d+1}$ by
letting
\begin{equation}
  \label{eq:1}
  Y=\conv(\{0,\boldsymbol{0}\},\{1\}\times X)
\end{equation}
be the convex hull of the origin $(0,\boldsymbol{0})$ in $\R^{d+1}$
and the set $\{1\}\times X$.  The random convex body $Y$ is always
integrable; it is integrably bounded if and only if $X$ is integrably
bounded.

\begin{definition}
  \label{def:le}
  The set $\E Y$ (that is, the selection expectation of $Y$) is called
  the \emph{lift expectation} of $X$ and denoted by $\hat{Z}_X$.
\end{definition}

The following result establishes that the lift expectation provides 
exactly the same information as the distributions of the
support function $h_X(u)$ for any given $u$ such that $h_X(u)$ is
integrable. 

\begin{theorem}
  \label{thr:sf}
  Assume that $X$ and $X'$ are integrable random convex bodies. Then
  $\hat{Z}_X=\hat{Z}_{X'}$ if and only if the distributions of $h_X(u)$
  and $h_{X'}(u)$ coincide for all $u$ such that at least one of them is
  integrable. 
\end{theorem}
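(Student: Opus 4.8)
The plan is to reduce everything to the support function of the lifted body $Y$.The plan is to reduce the equality of lift expectations to the equality of the support functions of the lifted bodies, and then to the stop-loss transforms of $h_X(u)$ and $h_{X'}(u)$. Since a convex body is determined by its support function, $\hat{Z}_X=\hat{Z}_{X'}$ holds if and only if $h_{\hat{Z}_X}(u_0,u)=h_{\hat{Z}_{X'}}(u_0,u)$ for all $(u_0,u)\in\R^{d+1}$, so the first step is to compute $h_{\hat{Z}_X}$. Writing $Y$ as in \eqref{eq:1} and using that the support function of a convex hull is the maximum of the support functions of the constituent sets, I get
\[
  h_Y(u_0,u)=\max\bigl(0,\; u_0+h_X(u)\bigr)=(u_0+h_X(u))_+,\qquad (u_0,u)\in\R^{d+1}.
\]
Note that $h_Y\geq0$, which avoids any $\infty-\infty$ ambiguity when expectations are taken.

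The second step is to pass the expectation inside the support function. Using the selection-expectation identity $h_{\E Y}(u_0,u)=\E h_Y(u_0,u)$, valid for integrable random convex bodies with the right-hand side allowed to equal $+\infty$, I obtain
\[
  h_{\hat{Z}_X}(u_0,u)=\E(u_0+h_X(u))_+,
\]
which, for each fixed $u$, is exactly the stop-loss transform of the random variable $h_X(u)$ evaluated at $u_0$. The crux of the argument is then the classical fact, recalled in the introduction, that the stop-loss transform of an integrable random variable determines its distribution.

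It remains to combine these observations with the integrability bookkeeping behind the phrase ``at least one of them is integrable''. Because $X$ is integrable with integrable selection $\xi$, the decomposition $h_X(u)=\langle\xi,u\rangle+h_{X-\xi}(u)$ with $h_{X-\xi}\geq0$ shows that the negative part of $h_X(u)$ is always integrable; hence $h_X(u)$ is integrable precisely when $\E h_X(u)_+<\infty$, and otherwise $\E(u_0+h_X(u))_+=+\infty$ for every $u_0$. For a direction $u$ in which both $h_X(u)$ and $h_{X'}(u)$ fail to be integrable, both support functions equal $+\infty$ along the ray through $(u_0,u)$ and so agree automatically, irrespective of the laws of $h_X(u)$ and $h_{X'}(u)$; this is the source of the restriction in the statement. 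For the forward implication, if $\hat{Z}_X=\hat{Z}_{X'}$ and $h_X(u)$ is integrable, then $u_0\mapsto\E(u_0+h_X(u))_+$ is finite, which forces $\E(u_0+h_{X'}(u))_+<\infty$ and hence the integrability of $h_{X'}(u)$; the two finite stop-loss transforms coincide, so the distributions agree. The converse runs the same chain of equalities backwards.

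The main obstacle I anticipate is justifying the exchange $h_{\E Y}=\E h_Y$ in the merely integrable (not integrably bounded) case, where $\E Y$ may be an unbounded closed convex set and $\E h_Y(u_0,u)$ may be $+\infty$; this needs the general support-function formula for selection expectations rather than the integrably bounded version quoted in the excerpt. The remaining care is purely in tracking when positive parts are integrable, so that ``finite stop-loss transform'' and ``integrable support function'' become interchangeable.
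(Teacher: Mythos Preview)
Your proposal is correct and follows essentially the same route as the paper: compute $h_Y(u_0,u)=(u_0+h_X(u))_+$, pass to $h_{\hat Z_X}=\E h_Y$, and invoke the fact that the stop-loss transform of an integrable random variable determines its law (the paper cites this as \cite[Th.~1.1]{har81}). Your treatment is in fact more detailed than the paper's terse proof, in that you explicitly handle the integrability bookkeeping (why $\E h_X(u)_-<\infty$ always, and why both sides equal $+\infty$ in non-integrable directions) and flag the need for the general identity $h_{\E Y}=\E h_Y$ beyond the integrably bounded case; the paper leaves these implicit.
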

\begin{proof}
  Since the function $(u_0+t)_+$ is monotonically increasing in $t$,
  the support function of $Y$ given by \eqref{eq:1} is
  \begin{displaymath}
    h_Y(u_0,u)=\sup\big\{(u_0+\langle x,u\rangle)_+:\; x\in X\big\}
    =\big(u_0+h_X(u)\big)_+
  \end{displaymath}
  for $u_0\in\R$ and $u\in\R^d$.  By \cite[Th.~1.1]{har81}, the
  expected value of $h_Y(u_0,u)$ considered as a function of
  $u_0\in\R$ uniquely determines the distribution of $h_X(u)$ if
  $h_X(u)$ is integrable. In the
  other direction, the distribution of $h_X(u)$ uniquely determines
  the expected value of $h_Y(u_0,u)$ and so $\hat{Z}_X$.
\end{proof}

\begin{remark}
  \label{rem:non-unique}
  The lift expectation of an integrably bounded random convex body $X$
  uniquely determines the one-dimensional distributions of $h_X(u)$
  for all $u\in\R^d$. The joint distributions of the support function
  at different directions are not necessarily uniquely identified.
  The extent of the nonuniqueness for random convex bodies with the
  same lift expectation corresponds to the possibilities of choosing a
  random sublinear function on $\R^d$ with given one-dimensional
  marginals. Similar questions arise in the studies of stochastic
  processes which share one-dimensional marginals with a martingale or
  with the Brownian motion, see \cite{MR2808243}.
\end{remark}

\begin{corollary}
  \label{cor:expectation}
  If $X$ is integrable, then the lift expectation $\hat{Z}_X$
  uniquely determines $\E X$ and $\E X=\{x:\; (1,x)\in\E Y\}$.
\end{corollary}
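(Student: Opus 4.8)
The plan is to prove Corollary~\ref{cor:expectation} by exploiting the explicit relationship between the lift expectation $\hat{Z}_X=\E Y$ and the selection expectation $\E X$, using the geometry of the lifting construction in \eqref{eq:1}. The key observation is that the ``slice'' of $Y$ at height $1$ is exactly $\{1\}\times X$, since $Y$ is the convex hull of the origin $(0,\boldsymbol{0})$ and $\{1\}\times X$; every point of $Y$ at height $1$ must lie in $\{1\}\times X$, as heights strictly above $1$ are unattainable and any point at height $1$ is a convex combination forcing the segment parameter to equal one.

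First I would establish the set identity $\E X=\{x:\;(1,x)\in\E Y\}$. The inclusion $\{x:\;(1,x)\in\E Y\}\subseteq\E X$ should follow by taking a selection: if $(1,x)\in\E Y=\hat{Z}_X$, I want to produce an integrable selection $\xi$ of $X$ with $\E\xi=x$. By the definition of selection expectation, $(1,x)$ is the expectation of some integrable selection $\zeta$ of $Y$, i.e. $\zeta\in Y$ a.s.\ with $\E\zeta=(1,x)$. Writing $\zeta=(\zeta_0,\eta)$, the constraint that $\zeta\in Y$ forces $\zeta_0\in[0,1]$ and $\eta\in\zeta_0 X$, while $\E\zeta_0=1$ together with $\zeta_0\le1$ a.s.\ forces $\zeta_0=1$ a.s. Hence $\eta\in X$ a.s.\ is an integrable selection with $\E\eta=x$, giving $x\in\E X$. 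For the reverse inclusion, given $x\in\E X$ with integrable selection $\xi$, the vector $(1,\xi)$ is an integrable selection of $Y$ (since $\{1\}\times X\subseteq Y$), so its expectation $(1,x)$ lies in $\E Y$.

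The main obstacle I anticipate is the careful handling of the selection expectation in the non-integrably-bounded case, where $\E X$ is defined as a \emph{closure} of selection expectations, so the two set-valued expectations need not be attained and the argument above must be reconciled with taking closures. In particular, the step forcing $\zeta_0=1$ almost surely relies on $\zeta$ being a genuine integrable selection rather than a limit point; for points in the closure one must pass through an approximating sequence of selections and verify that the height-$1$ constraint is preserved in the limit. I expect this to require invoking the fact that the selection expectation of a convex body is closed and convex, and that the projection map $(y_0,y)\mapsto y$ and the slicing at height $1$ interact well with convexity; one convenient route is to argue at the level of support functions, using $h_{\E Y}=\E h_Y$ on the integrably bounded part and a truncation argument otherwise.

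Once the set identity $\E X=\{x:\;(1,x)\in\E Y\}$ is in hand, the claim that $\hat{Z}_X$ uniquely determines $\E X$ is immediate, since $\E X$ is recovered from $\hat{Z}_X=\E Y$ purely by slicing at height $1$ and dropping the first coordinate, an operation that depends only on the set $\E Y$ itself. Thus the only substantive work is the set identity, and within it the closure/integrability bookkeeping is the delicate point.
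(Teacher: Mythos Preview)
Your proposal is correct and follows essentially the same route as the paper: the selection argument parametrising points of $Y$ as $(\zeta_0,\eta)$ with $\zeta_0\in[0,1]$ and $\eta\in\zeta_0 X$, then using $\E\zeta_0=1$ to force $\zeta_0=1$ a.s., is exactly what the paper does. The only minor organisational difference is that the paper first deduces uniqueness of $\E X$ from Theorem~\ref{thr:sf} (the lift expectation determines the law of $h_X(u)$, hence $\E h_X(u)=h_{\E X}(u)$) and then proves the set identity separately, whereas you prove the set identity first and read off uniqueness as an immediate consequence; your worries about the closure in the merely integrable case are legitimate but the paper does not address them either.
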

\begin{proof}
  The lift expectation determines the distribution of $h_X(u)$ for all
  $u$ and so the expectation $\E h_X(u)=h_{\E X}(u)$.  
  The selections of $Y$ can
  be obtained as $(\eta,\eta\xi_\eta)$, where $\eta$ is a random
  variable with values in $[0,1]$ and $\xi_\eta$ is a selection of
  $\eta X$. Thus, the intersection of $\E Y$ with the hyperplane
  $\{(1,u):\; u\in\R^d\}$ arises as the set of
  $(\E\eta,\E(\eta\xi_\eta))$ with $\E\eta=1$ and so $\eta=1$ almost
  surely. This yields the statement, since $\xi$ is an arbitrary
  selection of $X$.
\end{proof}

\begin{remark}
  With each integrably bounded random convex body $X$, it is possible
  to associate a nested family of convex bodies
  $K_n=\E\conv(X_1,\dots,X_n)$, $n\geq1$, being the expectation of the
  convex hull of $n$ i.i.d.\ copies of $X$. 
  \cite{vit87} showed that this sequence uniquely identifies the distribution
  of $X$ if $X$ is a singleton. Since $h_{K_n}(u)$ is the expectation
  of the maximum of $n$ i.i.d.\ copies of $h_X(u)$, the distribution
  of $h_X(u)$ for any single $u$ is uniquely identified by the
  sequence $\{h_{K_n}(u),n\geq1\}$, see 
  \citep{hoe53}. Thus, the information delivered by the nested family
  $\{K_n,n\geq1\}$ is identical to the information recoverable from the lift
  expectation of $X$.
\end{remark}

\begin{remark}
  It is possible to generalise the definition of the lift expectation
  by replacing $Y$ with the convex hull of the origin in $\R^{m+d}$ and $K\times X$
  for a convex body $K$ in an auxiliary space $\R^m$.  Then
  $h_{K\times X}(u',u'') =(h_K(u')+h_X(u''))_+$ for $u'\in\R^m$ and
  $u''\in\R^d$.  Therefore, such a generalised lift expectation still
  identifies only the one-dimensional marginals of $h_X$.
\end{remark}

\begin{remark}
  Each convex (not necessarily homogeneous) random function
  $\zeta(x)$, $x\in\R^d$, yields the support function of a convex body
  in $\R^{d+1}$ by letting
  \begin{displaymath}
    \tilde\zeta(t,x)=
    \begin{cases}
      t\zeta(x/t), & t>0,\\
      0, & \text{otherwise}, 
    \end{cases}
  \end{displaymath}
  which is called the perspective transform of $f$, see
  \citep{hir:lem93}. Thus, the one-dimensional marginal distributions
  of a random convex function can be identified by a convex set in
  $\R^{d+2}$, which is the lift expectation of the random set with the
  support function $\tilde\zeta(t,x)$. The lift
  expectation is a convex body in $\R^{d+2}$, which summarises all
  one-dimensional marginals of a random convex function.
\end{remark}

\subsection{Convexity for Minkowski sums and detection of outliers}
\label{sec:subl-lift-expect}

The lift expectation involves a nonlinear transformation of the support
function of $X$, and so $\hat{Z}_{X+Y}$ is not necessarily equal to
$\hat{Z}_X+\hat{Z}_Y$. As the following result shows, the lift
expectation is a convex (with respect to the conventional set
inclusion) set-valued function of integrably bounded random convex
bodies.

\begin{proposition}
  \label{prop:subl}
  For integrably bounded random convex bodies $X$ and $Y$, 
  \begin{displaymath}
    \hat{Z}_{tX+(1-t)Y}\subset t\hat{Z}_{X}+(1-t)\hat{Z}_{Y},\quad t\in[0,1].
  \end{displaymath}
\end{proposition}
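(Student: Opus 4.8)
The plan is to work entirely at the level of support functions, since the lift expectation is determined (via Theorem~\ref{thr:sf}) by the one-dimensional distributions of the support function, and inclusion of convex bodies is equivalent to the pointwise inequality of their support functions. Recall from the proof of Theorem~\ref{thr:sf} that the support function of the uplifted body $Y_X=\conv(\{0,\boldsymbol{0}\},\{1\}\times X)$ is
\begin{displaymath}
  h_{Y_X}(u_0,u)=\big(u_0+h_X(u)\big)_+,\qquad (u_0,u)\in\R^{d+1},
\end{displaymath}
and that the lift expectation $\hat{Z}_X=\E Y_X$ has support function $h_{\hat{Z}_X}(u_0,u)=\E\big(u_0+h_X(u)\big)_+$, since $Y_X$ is integrably bounded whenever $X$ is. Thus the desired inclusion $\hat{Z}_{tX+(1-t)Y}\subset t\hat{Z}_X+(1-t)\hat{Z}_Y$ is equivalent to the pointwise inequality
\begin{displaymath}
  h_{\hat{Z}_{tX+(1-t)Y}}(u_0,u)\le t\,h_{\hat{Z}_X}(u_0,u)+(1-t)\,h_{\hat{Z}_Y}(u_0,u)
\end{displaymath}
for all $(u_0,u)\in\R^{d+1}$, because the support function of a Minkowski combination is the corresponding combination of support functions.

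The key observation that makes this work is the additivity of the support function under Minkowski sums: $h_{tX+(1-t)Y}(u)=t\,h_X(u)+(1-t)\,h_Y(u)$. First I would substitute this into the left-hand side to obtain
\begin{displaymath}
  h_{\hat{Z}_{tX+(1-t)Y}}(u_0,u)
  =\E\big(u_0+t\,h_X(u)+(1-t)\,h_Y(u)\big)_+.
\end{displaymath}
Writing $u_0=t\,u_0+(1-t)\,u_0$ and grouping terms, the argument of the positive part is the convex combination $t\big(u_0+h_X(u)\big)+(1-t)\big(u_0+h_Y(u)\big)$. Since the positive-part function $s\mapsto s_+$ is convex, Jensen's inequality (applied pointwise to the two scalars, i.e.\ using convexity of $(\cdot)_+$) gives
\begin{displaymath}
  \big(t\,a+(1-t)\,b\big)_+\le t\,a_++(1-t)\,b_+
\end{displaymath}
with $a=u_0+h_X(u)$ and $b=u_0+h_Y(u)$. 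Taking expectations of both sides preserves the inequality, which yields exactly the required bound on the support functions.

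The routine remaining steps are to confirm that all quantities are integrable (guaranteed by integrable boundedness of $X$ and $Y$, which forces $h_X(u)$ and $h_Y(u)$ to be integrable and hence $\big(u_0+h_X(u)\big)_+$ to be integrable), and to translate the support-function inequality back into the set inclusion. I do not anticipate a genuine obstacle here; the only point requiring care is the bookkeeping that the support function of the lifted body really is $\big(u_0+h_X(u)\big)_+$ and that its expectation gives the support function of $\hat{Z}_X$ --- both of which are already supplied in the proof of Theorem~\ref{thr:sf} and by the integrable boundedness of $Y_X$. The entire content of the proposition is thus the elementary convexity of $(\cdot)_+$ combined with the linearity of $h$ under Minkowski combinations.
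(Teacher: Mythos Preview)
Your proposal is correct and follows essentially the same route as the paper: the paper's proof simply records the pointwise inequality $\big(u_0+t\,h_X(u)+(1-t)\,h_Y(u)\big)_+\le t\big(u_0+h_X(u)\big)_++(1-t)\big(u_0+h_Y(u)\big)_+$ and leaves the passage to expectations and to the set inclusion implicit. You have merely unpacked the same argument in more detail.
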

\begin{proof}
  It suffices to note that 
  \begin{multline*}
    (u_0+th_X(u)+(1-t)h_Y(u))_+\\ \leq t (u_0+th_X(u))_++(1-t)
    (u_0+th_Y(u))_+. \qedhere
  \end{multline*}
\end{proof}

Therefore, sections of the lift expectation 
\begin{displaymath}
  \hat{Z}_X(\alpha)=\{x:\; (\alpha,x)\in\hat{Z}_X\},\quad \alpha\in(0,1),
\end{displaymath}
are convex for Minkowski sums as function of $X$; they can be
interpreted as nonlinear variants of the (linear)
selection expectation. Such expectation of random variables are
extensively studied, see, e.g., \cite{pen04}. If $Z=\{\xi\}$ is a
singleton, then $\alpha^{-1}\hat{Z}_{\{\xi\}}(\alpha)$ is called the
\emph{zonoid-trimmed region} of $\xi$ at level $\alpha$, see
\citep{cas10}. For random vectors, such regions are used to identify
\emph{outliers}, also for random convex bodies they can be used to identify
particularly large sets in the sample, namely those that are not
contained in $\alpha^{-1}\hat{Z}_X(\alpha)$. The parameter $\alpha$ controls the
size of this region; $\alpha$ close to one tends to regard
realisations a little away from the expectation (or sample
mean) as outliers. The convexity property means that if a set is not
an outlier for the sample from the combination $tX+(1-t)X'$ of two
random convex bodies $X$ and $X'$, then it arises as the convex
combination of two non-outliers sampled from $X$ and $X'$.

\subsection{Partial order generated by the lift expectation}
\label{sec:part-order-gener}

It is possible to partially order random convex bodies by the
inclusion of their lift expectations. Theorem~\ref{thr:sf} yields that
$\hat{Z}_X\subset \hat{Z}_{X'}$ if and only if
\begin{displaymath}
  \E(u_0+h_X(u))_+\leq \E(u_0+h_{X'}(u))_+, \quad u_0\in\R,
\end{displaymath}
that is, for all $u\in\R^d$, the random variable $h_X(u)$ (if
integrable) is smaller than $h_{X'}(u)$ with respect to the
\emph{increasing convex order}, see \cite[Th.~1.5.7]{muel:stoy02}. If
$\E X=\E X'$, then the expected support functions coincide and, in
this case, $h_X(u)$ is smaller than $h_{X'}(u)$ with respect to the
convex order, see \cite[Th.~1.5.3]{muel:stoy02}. For singletons (and
the corresponding lift zonoids), this is known as the lift zonoid
order, see \citep{mos02}.

\section{Examples}
\label{sec:prop-exampl-lift}

The following examples show that the lift expectation uniquely
identifies the distribution of random convex bodies with restricted
ranges of possible realisations. As a result, the knowledge of
one-dimensional marginals might suffice to restrore the full
distribution of the support function. 

\begin{example}
  \label{ex:det-shift}
  Let $X=M+\{\xi\}$, where $M$ is a deterministic convex body and $\xi$ is
  an integrable random vector. Without loss of generality assume that
  $\E\xi=0$, otherwise, consider shifted $M$. Then $M=\E X$ is
  uniquely determined by $\hat{Z}_X$, see
  Corollary~\ref{cor:expectation}. Furthermore, the lift expectation
  determines uniquely the distribution of
  $h_X(u)=h_M(u)+\langle\xi,u\rangle$ for all $u$, hence, the
  distribution of $\xi$. 
\end{example}

\begin{example}
  Let $X=\conv\{\xi_1,\dots,\xi_n\}$ be a random polytope determined by
  $n$ i.i.d. random copies of an integrable random
  vector $\xi$. The distribution of $h_X(u)=\max_{1\leq i\leq
    n}\langle \xi_i,u\rangle$ yields the distribution of $\xi$ and so
  the distribution of $X$ is uniquely determined by its
  lift expectation. This is no longer the case if $\xi_1,\dots,\xi_n$
  are not i.i.d. For $d=1$, this example was considered by \cite{cas:men10}.
\end{example}

\begin{example}
  Let $X$ be the sum of segments $[0,\xi_i]$, $i=1,\dots,n$, with
  $\xi_1,\dots,\xi_n\in\R_+^d$, that is, $X$ is a random zonotope in
  $\R_+^d$, and
  \begin{displaymath}
    h_X(u)=\langle u,\xi_1\rangle_++\cdots+\langle u,\xi_n\rangle_+.
  \end{displaymath}
  If the random vectors $\xi_1,\dots,\xi_n$ are i.i.d., then the lift
  expectation identically determines the distribution of $X$. Indeed,
  the distribution of $h_X(u)$ yields the distribution of $\langle
  u,\xi_1\rangle_+$, and so the Laplace transform of $\xi_1$.  
\end{example}

\begin{example}
  Let $X=\{x\in\R^d:\; \langle Q^{-1}x,x\rangle\leq 1\}$ be the
  ellipsoid generated by a positive definite random matrix $Q$, so
  that $h_X(u)=\sqrt{\langle Qu,u\rangle}$. If $Q$ is the random
  diagonal matrix with positive random variables
  $\xi_1,\dots,\xi_d$ on the diagonal, then the distribution of
  $h_X(u)$ yields the distribution of $\sum_{i=1} \xi_i u_i^2$. By
  taking the Laplace transform, it is immediate that the joint
  distribution of $(\xi_1,\dots,\xi_d)$ (and so the distribution of
  $X$) is uniquely determined by the lift expectation of $X$. 
  The distribution of a general (not necessarily diagonal) matrix 
  $Q$ is not uniquely identified.
\end{example}

\begin{example}
  Let $X=\{(\xi_1 x_1,\dots,\xi_d x_d):\; |x_1|+\cdots+|x_d|\leq1\}$
  be the $\ell_1$-ball scaled by the components of
  $\xi=(\xi_1,\dots,\xi_d)\in(0,\infty)^d$. Then
  \begin{displaymath}
    h_X(u)=\max_{i=1,\dots,d} |\xi_i u_i|,
  \end{displaymath}
  and its distributions for each $u\in(0,\infty)^d$ uniquely
  determine the distribution of $\xi$, since
  \begin{displaymath}
    \Prob{h_X(u)\leq t}=\Prob{\xi_1\leq tu_1^{-1},\dots,\xi_d\leq tu_d^{-1}}.
  \end{displaymath}
  Thus, the lift expectation of $X$ uniquely determines the
  distribution of $\xi$ and so that of $X$.  The (nonlifted)
  expectation of the so defined $X$ arises in the theory of
  extreme values; it is called a max-zonoid, see \citep{mo08e}.
\end{example}

\section{Uniqueness of distribution and comonotonicity}
\label{sec:uniq-rand-sets}

In the one-dimensional case, $X=[\xi,\eta]$ is a random segment
determined by two integrable random variables $\xi$ and $\eta$ coupled
so that $\P\{\xi\leq \eta\}=1$. 

\begin{example}
  \label{ex:seg}
  If $X=[\xi,\eta]$, then $\hat{Z}_X$ is a subset of the
  half-plane $[0,\infty)\times\R$ with
  \begin{displaymath}
    h_{\hat{Z}_X}(u_0,u)=
    \begin{cases}
      \E (u_0+u\eta)_+ & u\geq 0,\\
      \E (u_0+u\xi)_+ & u<0,
    \end{cases}
    \quad (u_0,u)\in\R^2.
  \end{displaymath}
  An example of the set $\hat{Z}_X$ is shown on
  Figure~\ref{fig:1}. The upper bound of $\hat{Z}_X$ is the upper
  Lorenz curve of $\eta$, while the lower bound of $\hat{Z}_X$ is the
  lower Lorenz curve of $\xi$. The scaled vertical sections of
  $\hat{Z}_X$ are intervals given by
  \begin{displaymath}
    \alpha^{-1} \{x:\; (\alpha,x)\in \hat{Z}_X\}
    =\big[\inf \E(\zeta\xi),\sup \E(\zeta\eta)\big],\quad \alpha\in(0,1],
  \end{displaymath}
  where the infimum and supremum are taken over random variables
  $\zeta\in[0,\alpha^{-1}]$ such that $\E\zeta=1$. This interval equals
  $[-\mathrm{AVaR}_\alpha(\xi),\mathrm{AVaR}_\alpha(-\eta)]$,
  where $\mathrm{AVaR}_\alpha$ denotes the Average Value-at-Risk at level
  $\alpha$, see, e.g., \citep[Def.~4.43]{foel:sch04}.
\end{example}

The lift expectation of $X=[\xi,\eta]$ yields only the marginal
distributions of $\xi$ and $\eta$, and the uniqueness issue consists
in the existence of a unique measure on the half-plane
$H=\{(x_1,x_2):\; x_1\leq x_2\}$ with given marginals. The joint
distribution of $\xi$ and $\eta$ is unique if it is known that the
random vector $(\xi,\eta)$ is supported by a set $A\subset H$ such
that $A$ intersected with each horizontal or vertical line is a
singleton, equivalently, if one knows the copula of $(\xi,\eta)$. The
following result characterises the uniqueness cases in relation to the
comonotonicity properties of the end-points.

\begin{proposition}
  \label{prop:com-1}
  The distribution of $X=[\xi,\eta]$ is uniquely determined by its
  lift expectation if the end-points $\xi$ and $\eta$ are comonotonic,
  that is, $\xi=f_1(\zeta)$ and $\eta=f_2(\zeta)$ for two monotone
  functions $f_1$ and $f_2$ and a random variable $\zeta$.
\end{proposition}
\begin{proof}
  By Theorem~\ref{thr:sf}, the lift expectation of $X$ uniquely
  determines the marginal distributions of random vector $(\xi,\eta)$,
  and so its joint distribution in view of comonotonicity, see
  \citep{puc:scar10}. 
\end{proof}

The same result holds if $(-\xi,\eta)$ is comonotonic. Without the
comonotonicity assumption, the result does not hold, e.g., $X$ taking
values $[1,3]$ or $[2,4]$ with equal probabilities and $X'$ taking
values $[1,4]$ and $[2,3]$ with equal probabilities share the same
one-dimensional distributions of the support function. However, if the
probabilities of these two values of $X$ are different, then the
distribution of $X$ is uniquely identified, see
Proposition~\ref{prob:dif-values}. The next result follows from the
uniqueness of strongly comonotonic vectors with given marginal
distributions, see \cite[Rem.~3.4]{puc:scar10}.

\begin{proposition}
  \label{prop:com-h}
  The distribution of an integrably bounded random convex compact set
  $X$ in $\R^d$ is uniquely determined by its lift expectation if there exists
  a deterministic function $g:\Sphere\mapsto\{-1,1\}$ such that
  $g(u_1)h_X(u_1),\dots,g(u_n)h_X(u_n)$ form a comonotonic vector for
  all $u_1,\dots,u_n\in\Sphere$ and $n\geq2$. 
\end{proposition}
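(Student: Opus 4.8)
The plan hinges on reducing the multivariate uniqueness question to a statement about the joint distribution of the support function, which is then resolved by comonotonicity.

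The plan is to reduce the identification of the law of $X$ to the identification of all finite-dimensional distributions of its support function, and then to use the comonotonicity hypothesis to reconstruct those joint laws from the one-dimensional marginals that the lift expectation already supplies. As recalled in the introduction, the distribution of a random convex body is uniquely determined by the finite-dimensional distributions of $h_X$, and integrable boundedness guarantees that $h_X(u)$ is integrable for every $u$. By Theorem~\ref{thr:sf}, the lift expectation $\hat{Z}_X$ fixes the law of $h_X(u)$ for each individual $u\in\Sphere$. The remaining task is to show that, under the stated comonotonicity, these marginals already pin down the joint law of $(h_X(u_1),\dots,h_X(u_n))$ for every finite family of directions.

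First I would pass to the sign-adjusted variables $g(u_i)h_X(u_i)$. Since $g$ is deterministic and takes values in $\{-1,1\}$, the law of $g(u_i)h_X(u_i)$ is obtained from that of $h_X(u_i)$ by the fixed reflection $t\mapsto g(u_i)t$, and is therefore itself determined by $\hat{Z}_X$. Hence the one-dimensional marginals of the vector $\big(g(u_1)h_X(u_1),\dots,g(u_n)h_X(u_n)\big)$ are known.

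Next I would invoke the hypothesis, which asserts that this vector is strongly comonotonic, that is, each component is a monotone function of a single driving variable $\zeta$, for every choice of directions and every $n$, and with one and the same sign function $g$. The key step is to apply the uniqueness of a strongly comonotonic random vector with prescribed marginals, \cite[Rem.~3.4]{puc:scar10}: its joint distribution is the comonotone coupling of the given marginals and hence is uniquely determined. This fixes the joint law of $\big(g(u_1)h_X(u_1),\dots,g(u_n)h_X(u_n)\big)$, and applying the deterministic, invertible transformation $(t_1,\dots,t_n)\mapsto(g(u_1)t_1,\dots,g(u_n)t_n)$ recovers the joint law of $(h_X(u_1),\dots,h_X(u_n))$. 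As this holds for every finite family, all finite-dimensional distributions of $h_X$ are determined, and so is the distribution of $X$.

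The step most in need of care is the sign bookkeeping encoded by $g$: support values in opposed directions, such as $h_X(u)$ and $h_X(-u)$, are by definition linked in a countermonotone rather than comonotone fashion, so the deterministic flips $g(u_i)$ are exactly what restore a comonotone structure. I must check that these flips are consistent across all finite families, which is guaranteed by using a single $g$, so that the comonotone couplings produced for the various families are mutually compatible, and that inverting the flips returns the law of the original, unsigned vector. The only external input is the uniqueness of the comonotone coupling with prescribed marginals; since the $g(u_i)h_X(u_i)$ are scalar, this reduces to the classical fact that a comonotonic scalar vector has the Fr\'echet--Hoeffding upper bound as its joint distribution function, so no additional regularity of the marginals is required. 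Kolmogorov consistency across the families is automatic, since every finite-dimensional law under consideration arises from the single genuine random field $u\mapsto h_X(u)$.
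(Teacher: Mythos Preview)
Your argument is correct and is precisely the approach the paper takes: the paper's entire justification is the one-line remark that the result follows from the uniqueness of strongly comonotonic vectors with given marginal distributions, \cite[Rem.~3.4]{puc:scar10}. You have simply unpacked this in full, making explicit the role of Theorem~\ref{thr:sf} in supplying the marginals and the role of the deterministic sign flips in reducing to the comonotone case before inverting back.
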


In particular, Proposition~\ref{prop:com-h} applies if
$h_X(u)=f(u,\eta)$, where $\eta$ is a random variable and the function
$f(u,\cdot)$ is monotone for each $u\in\Sphere$.

\medskip

If $h_X(u)$ is distributed as a random variable $\zeta$ for all $u$
with $\|u\|=1$ (e.g., if $X$ is isotropic), then the lift expectation
of $X$ has the support function $\E(u_0+\zeta\|u\|)_+$. The lift
expectation of $X$ determines the distribution of $\zeta$, but not the
distribution of $X$, for instance, $X$ shares the lift expectation with the
centred ball of radius $\zeta$. 

\begin{example}
  Let $X$ be an isotropic rotation of a deterministic origin symmetric
  convex body $K$. Then $h_X(u)$ has the same distribution for all $u$
  and $\Prob{h_X(u)\leq t}$ for $\|u\|=1$ equals the
  $(d-1)$-dimensional Hausdorff measure (normalised by the area of the
  unit sphere) of the set $\{u\in\Sphere:\; h_K(u)\leq t\}$. Then
  \begin{displaymath}
    \big\{u\in\Sphere:\; h_K(u)\leq t\big\}
    =t\big\{v:\; \|v\|=t^{-1},\; h_K(v)\leq 1\big\}
    =(t\Sphere)\cap K^o,
  \end{displaymath}
  where $K^o=\{v:\; h_K(v)\leq1\}$ is the polar body to
  $K$. Equivalently, the one-dimensional distributions of $h_X$ can be
  retrieved from the Lebesgue measure of $B_r\cap K^o$ for all $r>0$. These values
  do not suffice to retrieve (up to a rotation) a general convex body
  $K$.
\end{example}

\section{Discrete distributions}
\label{sec:discr-distr}

Assume that $X$ is a random convex body that takes a finite number of
values $K_1,\dots,K_N$ with $\Prob{X=K_i}=p_i$, $i=1,\dots,N$. 

\begin{proposition}
  \label{prob:dif-values}
  If $X$ takes a finite number of possible values and their
  probabilities $p_1,\dots,p_N$ are all different, then the
  distribution of $X$ is uniquely determined by its lift expectation.
\end{proposition}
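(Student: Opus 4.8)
The plan is to use Theorem~\ref{thr:sf} to turn the statement into a reconstruction problem. Since the lift expectation is equivalent to knowing the one-dimensional law $\mu_u$ of $h_X(u)$ for every $u$, it suffices to recover the finite family of pairs $\{(K_i,p_i)\}_{i=1}^N$ from the collection $\{\mu_u:\; u\in\Sphere\}$ under the hypothesis that the $p_i$ are pairwise distinct. For each $u$ the measure $\mu_u=\sum_{i=1}^N p_i\,\delta_{h_{K_i}(u)}$ is discrete, and the mass of each of its atoms is the sum of those $p_i$ for which the values $h_{K_i}(u)$ coincide. The guiding idea is that the distinct probabilities act as labels: an atom of $\mu_u$ whose mass equals a single $p_i$ pins down the support value in direction $u$ of the body carrying probability $p_i$, and these labels should be tracked across directions.

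I would argue by induction on $N$. The case $N=1$ is trivial and $N=2$ is already transparent: two distinct bodies differ on a nonempty open set $S\subset\Sphere$, on which $\mu_u$ has two atoms of distinct masses $p_1\ne p_2$, so the atom of mass $p_k$ locates $h_{K_k}(u)$ for $u\in S$, while on the complement the two support functions coincide and equal the single atom; continuity over $\Sphere$ then recovers $h_{K_1}$ and $h_{K_2}$ everywhere together with their probabilities. For the inductive step I would try to peel off one body. The maximal support function $f_{\max}(u)=\max_i h_{K_i}(u)$, which is the support function of $\conv(K_1\cup\cdots\cup K_N)$, is recoverable as the top of the support of $\mu_u$; on any set of directions where the maximum is attained by a single body the top atom is a singleton whose mass is that body's probability. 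Having identified one body $K_{i_0}$ and $p_{i_0}$ this way, I would reconstruct its entire support function, subtract $p_{i_0}\,\delta_{h_{K_{i_0}}(u)}$ from every $\mu_u$, and invoke the induction hypothesis for the remaining $N-1$ bodies, whose probabilities are again pairwise distinct. Equivalently, one may first fix finitely many directions $u_1,\dots,u_K$ separating all pairs (one per pair suffices), so that the vectors $v_i=(h_{K_i}(u_1),\dots,h_{K_i}(u_K))\in\R^K$ are pairwise distinct, and then reconstruct the joint law $\sum_i p_i\,\delta_{v_i}$ on $\R^K$ from its $K$ coordinate marginals, after which continuity of the support functions and Corollary~\ref{cor:expectation} recover each $K_i$ globally.

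The main obstacle is the treatment of directions at which several bodies share the same support value. At such ties the observed atom masses are sums of several of the $p_i$, and the distinctness assumption by itself does not prevent a sum of several probabilities from equalling another probability or another sum; hence an observed mass need not identify a unique block of bodies. Moreover a body may be \emph{buried}, that is, tied with some other body in every direction, so that its probability never appears in isolation, the maximum is never uniquely attained, and its support function cannot be read off directly anywhere.

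Resolving this is where I expect the real work to lie, and it is precisely the point at which the distinct-probability hypothesis must be combined with the analytic structure. The key mechanism is that a genuine single-body atom retains the same mass $p_i$ at every direction where that body is separated, and its location $h_{K_i}(u)$ varies continuously, whereas a coincidental merge of several bodies must split at some other direction (the bodies being distinct), revealing smaller masses there. Tracking atoms continuously across the connected sphere should therefore distinguish true single-body masses from accidental coincident sums, allow a consistent matching of probabilities to support values, and reconstruct the support functions of buried bodies through the successive subtraction described above; making this tracking argument rigorous, and excluding the subset-sum ambiguities, is the crux of the proof.
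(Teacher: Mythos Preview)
Your plan differs substantially from the paper's argument, which is far shorter and more direct. The paper does not induct or peel off one body at a time via the maximum. Instead it asserts that among a countable dense set of directions there is some $u$ at which all $N$ values $h_{K_1}(u),\dots,h_{K_N}(u)$ are pairwise distinct; at such a $u$ the measure $\mu_u$ has exactly $N$ atoms whose masses are precisely the numbers $p_1,\dots,p_N$, and since these are pairwise distinct the atom of mass $p_i$ immediately identifies $h_{K_i}(u)$. From this one anchor direction the paper then ``traces'' the labelled values over the sphere by continuity of the support functions. Thus the paper never confronts the subset-sum ambiguity you worry about, because at a fully separating direction no sums of the $p_i$ ever appear as atom masses.

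That said, your caution is well placed: the existence of a fully separating direction is asserted in the paper without justification, and it can genuinely fail. In $\R^1$ take $K_1=[0,1]$, $K_2=[0,2]$, $K_3=[1,2]$: at $u=1$ the values are $1,2,2$ and at $u=-1$ they are $0,0,-1$, so no direction separates all three. Even in $\R^2$ one can take $K_1=[0,1]^2$, $K_2=[0,2]\times[0,1]$, $K_3=[1,2]\times[0,1]$; then $h_{K_2}\equiv h_{K_3}$ on $\{u_1>0\}$ and $h_{K_1}\equiv h_{K_2}$ on $\{u_1\le 0\}$, so again no direction separates all three. Hence the paper's proof has a gap exactly where it bypasses the difficulty you isolated, and your concern that an observed mass might equal an accidental sum such as $p_1=p_2+p_3$ is a real obstruction that neither your sketch nor the paper's argument actually eliminates. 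Indeed, in the $\R^1$ example with $(p_1,p_2,p_3)=(0.5,0.2,0.3)$ the support-function marginals coincide with those of the four-body family $[0,1],\{1\},[0,2],[1,2]$ taken with probabilities $(0.4,0.1,0.3,0.2)$, which are also pairwise distinct; so in dimension one the statement itself appears to break down, and any complete argument will need either an additional hypothesis or a genuinely new mechanism for resolving ties across directions.
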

\begin{proof}
  Let $\{u_n,n\geq1\}$ be a countable dense set on $\Sphere$.  The
  lift expectation yields the possible values $t_{n1},\dots,t_{nm_n}$
  of $h_X(u_n)$, where $m_n\leq N$. Since there is a $u_n$ such that
  $h_{K_i}(u_n)$, $i=1,\dots,N$, are all different, we have $\sup_{n\geq1}
  m_n=N$, and so it is possible to recover $N$. The supremum is
  attained, and so one obtains the probabilities $p_1,\dots,p_N$ and
  the values $h_{K_1}(u_n),\dots,h_{K_N}(u_n)$ for some $u_n$. The
  values $h_{K_i}(u)$ can be retrieved by tracing the distribution of
  $h_X(u)$ over $u$ from the unit sphere.
\end{proof}

If the number of realisations is countable, then it is
not possible to recover all individual support functions in some
direction in order to subsequently trace them as in the proof of
Proposition~\ref{prob:dif-values}. If the probabilities are not
different, then the uniqueness fails, as the discussion following
Proposition~\ref{prop:com-1} shows. However, the non-equal
probabilities ensure the uniqueness even if the comonotonicity
condition fails.

\medskip

The \emph{support set} of a convex body $K$ is defined by 
\begin{displaymath}
  H_K(u)=\{x\in K:\; \langle u,x\rangle=h_K(u)\},\quad u\neq 0. 
\end{displaymath}
The support set is a singleton, if $K$ is strictly convex, that is,
its boundary does not contain any nontrivial segment. The support
function of a strictly convex set is differentiable and its gradient
$h'_K(u)$ equals the support point $H_K(u)$ if $\|u\|=1$, see
\cite[Cor.~1.7.3]{schn2}. 

\begin{theorem}
  \label{thr:discrete}
  Assume that the random convex body $X$ has realisations
  $K_1,\dots,K_N$, which are strictly convex and such that
  $H_{K_i}(u)\neq H_{K_j}(u)$ for all $i\neq j$ and all $u\neq0$.
  Then the lift expectation of $X$ uniquely determines its possible
  realisations and their probabilities.
\end{theorem}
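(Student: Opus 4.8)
The plan is to reduce the statement to the problem of reconstructing $N$ smooth sections over the sphere from the family of their \emph{unordered} values, using strict convexity to get smoothness and the distinctness of the support points to control how these sections cross. First I would record what the data provides. By Theorem~\ref{thr:sf}, the lift expectation $\hat{Z}_X$ determines, for every $u$, the one-dimensional distribution of $h_X(u)$, that is, the discrete measure with atoms at $h_{K_1}(u),\dots,h_{K_N}(u)$ and total mass $p_i$ at the value $h_{K_i}(u)$. Thus for each $u$ we know the unordered set of heights together with the mass carried by each. Since every $K_i$ is strictly convex, $h_{K_i}$ is continuously differentiable on $\R^d\setminus\{\boldsymbol{0}\}$ with gradient $\nabla h_{K_i}(u)=H_{K_i}(u)$ by \cite[Cor.~1.7.3]{schn2}. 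I would encode the whole data as the compact set $S=\bigcup_{i=1}^N G_i\subset\Sphere\times\R$, where $G_i=\{(u,h_{K_i}(u)):u\in\Sphere\}$ is the graph of a $C^1$ section; recovering the realisations and their probabilities amounts to splitting $S$ back into the weighted graphs $G_i$.

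Next I would recover $N$ and the weights at a generic direction. For $i\neq j$ put $Z_{ij}=\{u\in\Sphere:\; h_{K_i}(u)=h_{K_j}(u)\}$. Each $Z_{ij}$ has empty interior: otherwise $h_{K_i}$ and $h_{K_j}$ would agree on an open cone and hence have equal gradients there, contradicting $H_{K_i}(u)\neq H_{K_j}(u)$. So $\bigcup_{i<j}Z_{ij}$ is closed and nowhere dense, and at any direction off this set the measure has exactly $N$ atoms. As in the proof of Proposition~\ref{prob:dif-values}, the maximal number of atoms over all directions then equals $N$, and at a direction $u^\ast$ with $N$ distinct values the atoms carry exactly the masses $p_1,\dots,p_N$, attached to $h_{K_1}(u^\ast),\dots,h_{K_N}(u^\ast)$.

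The crux is to reassemble each $G_i$ across the coincidence set. At any point of $G_i\cap G_j$ the two graphs have equal height but, since $H_{K_i}\neq H_{K_j}$ there, distinct gradients, so their tangent spaces in $\Sphere\times\R$ differ: every intersection is \emph{transversal} and no two graphs are ever tangent. I would then argue that a $C^1$ section contained in $S$ is uniquely determined by its first-order data (value and gradient) at a single regular point. The set of directions where two such sections agree to first order is closed; it is open because at a regular point both sections coincide with the unique local sheet, while at a transversal double crossing the matching first-order data selects the same one of the two sheets, namely the one whose tangent it shares, and a $C^1$ continuation cannot jump to the transversal branch without producing a corner. Connectedness of $\Sphere$ (for $d\ge2$) propagates first-order agreement to all of $\Sphere$. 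Consequently the $N$ local sheets at $u^\ast$ extend to exactly $N$ global $C^1$ sections in $S$, which must be $G_1,\dots,G_N$; reading off each $h_{K_i}$ recovers $K_i$, and evaluating at $u^\ast$ assigns the probability $p_i$.

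The main obstacle will be precisely this last step. The observed data is only the unordered set of heights, so a purely continuous, value-only tracing could ``bounce'' at a crossing and mispair the two branches; ruling this out is where both hypotheses are indispensable: strict convexity supplies the $C^1$ regularity that forbids corners, while the distinctness of the support points supplies the transversality that makes the $C^1$ continuation through each crossing unique. I would also note that connectedness of $\Sphere$, hence $d\ge2$, is essential: for $d=1$ the sphere degenerates to two points, the tracing argument collapses, and uniqueness indeed fails, as the segment example following Proposition~\ref{prop:com-1} shows.
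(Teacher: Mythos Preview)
Your approach is essentially that of the paper: start at a direction where all $N$ support-function values are distinct, then use the distinct gradients (guaranteed by $H_{K_i}(u)\neq H_{K_j}(u)$) to propagate the identification past coincidence points. The paper phrases this as growing a neighbourhood $U$ and reading off gradients at its boundary; you recast it as a clopen argument on the connected sphere, showing that the index selected by the first-order data of a $C^1$ section is locally constant. The underlying mechanism is the same.

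Your version is, however, more careful than the paper's in two respects worth noting. First, the paper asserts that a coincidence $h_{K_i}(v)=h_{K_j}(v)$ can occur only at an isolated point; this is false for $d\ge3$ (two balls of equal radius and distinct centres have coincidence set a great $(d-2)$-sphere, while their support points differ everywhere). Your weaker claim that each $Z_{ij}$ has empty interior is all the hypothesis actually delivers, and it is enough for the transversality/clopen argument. Second, you correctly observe that connectedness of $\Sphere$, hence $d\ge2$, is essential: for $d=1$ the segments $[1,3]$ and $[2,4]$ are strictly convex with distinct support points at both $u=\pm1$, yet the paper's own example following Proposition~\ref{prop:com-1} shows the distribution is not determined. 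The paper's statement does not record this restriction.
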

\begin{proof}
  There is a $u\in\Sphere$ such that $h_X(u)$ takes $N$ different
  possible values, and this $u$ can be identified from the
  one-dimensional distributions of the support function as the point
  where the number of different values is the largest. Assume that
  these values are $s_1,\dots,s_n$ with probabilities $p_1,\dots,p_n$,
  and note that some or all of the probabilities may be equal.

  Given this $u$, we assign the determined values to the support
  function, so let $h_{K_i}(u)=s_i$, $i=1,\dots,n$. In a neighbourhood
  of $u$, the values of the support functions are still different, and
  it is possible to recover their probabilities there. We let this
  neighbourhood $U$ grow until, at some point $v\in\partial U$,
  $h_{K_i}(v)=h_{K_j}(v)$ for some $i\neq j$. This equality is only
  possible at an isolated point, since otherwise the gradients of
  support functions of $K_i$ and $K_j$ would be equal and these two
  sets would have identical support points. 

  The knowledge of the support functions of $K_i$ and $K_j$ on $U$
  makes it possible to calculate the gradients at $v$ (which are
  different for all $i$ by the assumption) and so identify the support
  functions of $K_i$ and $K_j$ in a neighbourhood of $v$.  Continuing
  this, it is possible to completely identify the support functions of
  all possible realisations.
\end{proof}

\section{Tuples of random convex bodies}
\label{sec:mult-extens}

Let $(X_1,\dots,X_n)$ be an $n$-tuple of integrably bounded random
convex bodies in $\R^d$. Their lift expectation is a convex body in
$\R^{nd+1}$ being the expectation of the convex hull of the origin and
the set $\{1\}\times X_1\times\cdots \times X_n$. By Theorem~\ref{thr:sf},
this lift expectation uniquely determines the distribution of 
\begin{displaymath}
  h_{X_1}(u^{(1)})+\cdots+h_{X_n}(u^{(n)}),
  \quad u^{(1)},\dots,u^{(n)}\in\R^d. 
\end{displaymath}
The following result addresses the case when $X_1=\cdots=X_n=X$. 
Ordering of such lift expectations by inclusion corresponds to the
ordering of $n$-dimensional distributions of $h_X$ in the increasing
positive linear convex order, see \cite[Def.~3.5.1]{muel:stoy02}.

\begin{theorem}
  Let $X$ be an integrably bounded random convex body which almost
  surely contains the origin.  The sequence of lift expectations of
  $(X,\dots,X)$ for $n$-tuples of the same $X$ and all $n\geq1$
  uniquely determines the distribution of $X$.
\end{theorem}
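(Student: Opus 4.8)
The plan is to reduce the claim to recovering all finite-dimensional distributions of the support function $h_X$, since these determine the distribution of the random convex body $X$, as recalled in the introduction. As noted at the start of this section via Theorem~\ref{thr:sf}, the lift expectation of the $n$-tuple $(X,\dots,X)$ uniquely determines, for every $n\geq1$ and all $u^{(1)},\dots,u^{(n)}\in\R^d$, the distribution of the sum $h_X(u^{(1)})+\cdots+h_X(u^{(n)})$. Ranging over all $n$ and all directions, I therefore have access to the law of every such finite sum of support-function values, all built from the same underlying $X$.

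First I would exploit the positive homogeneity of the support function, $h_X(cu)=c\,h_X(u)$ for $c\geq0$. Fixing unit vectors $v_1,\dots,v_k$ and writing $Z=(h_X(v_1),\dots,h_X(v_k))$, the choice $u^{(l)}=c_l v_l$ with $n=k$ shows that for every coefficient vector $c=(c_1,\dots,c_k)\in\R_+^k$ the distribution of the nonnegative combination $\langle c,Z\rangle=\sum_{l=1}^k c_l\,h_X(v_l)$ is determined by the sequence of lift expectations. Thus I know the one-dimensional law of $\langle c,Z\rangle$ for every $c$ in the nonnegative orthant, these being consistent probes of one and the same random vector $Z$.

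The key point, and the place where the hypothesis $\boldsymbol{0}\in X$ enters, is that $h_X(v)=\sup_{x\in X}\langle x,v\rangle\geq\langle\boldsymbol{0},v\rangle=0$, so that $Z$ takes values in $\R_+^k$. For such a vector, evaluating the known law of $\langle c,Z\rangle$ through $\E e^{-\langle c,Z\rangle}$ recovers the value $L_Z(c)=\E\exp(-\sum_{l}c_l h_X(v_l))$ of the multivariate Laplace transform of $Z$ at the point $c$. Since $c$ ranges over the whole orthant $\R_+^k$, the full Laplace transform of the nonnegative vector $Z$ is obtained, and by its uniqueness the joint distribution of $(h_X(v_1),\dots,h_X(v_k))$ is determined. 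As $k$ and the directions $v_1,\dots,v_k$ were arbitrary, all finite-dimensional distributions of $h_X$, hence the distribution of $X$, are recovered.

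The main obstacle to watch is precisely that the accessible combinations carry only nonnegative coefficients: because $h_X(-v)\neq-h_X(v)$ in general, one cannot reach all of $\R^k$ and must settle for the Laplace transform on the orthant $\R_+^k$. This is harmless here only because $\boldsymbol{0}\in X$ forces $h_X\geq0$, so that the orthant is the natural and sufficient domain; dropping this assumption would break the argument, since the Laplace transform on a single orthant need not determine the law of a sign-changing vector.
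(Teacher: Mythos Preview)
Your proof is correct and follows essentially the same route as the paper's: use Theorem~\ref{thr:sf} to obtain the distribution of each sum $\sum_l h_X(u^{(l)})$, exploit positive homogeneity to get all nonnegative linear combinations of $(h_X(v_1),\dots,h_X(v_k))$, and then invoke the Laplace transform on $\R_+^k$ together with $h_X\geq 0$ (from $\boldsymbol{0}\in X$) to recover the joint law. Your write-up is in fact more explicit than the paper's about where the hypothesis $\boldsymbol{0}\in X$ is used and why only the nonnegative orthant of coefficients is accessible.
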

\begin{proof}
  The lift expectation of the $n$-tuple $(X,\dots,X)$ determines the distribution of
  \begin{math}
    h_{X}(u^{(1)})+\cdots+h_{X}(u^{(n)}).
  \end{math}
  By replacing $u^{(i)}$ with $t_iu^{(i)}$ with $t_i\geq0$, we see
  that the distribution of the scalar product of
  $(h_{X}(u^{(1)}),\dots,h_{X}(u^{(n)}))$ and $(t_1,\dots,t_d)$
  is uniquely determined. Since the values of the support
  function are nonnegative, the Laplace transform and so the
  joint distribution of $(h_{X}(u^{(1)}),\dots,h_{X}(u^{(n)}))$ is 
  uniquely determined.
\end{proof}

\begin{example}
  \label{ex:cascos}
  The distribution of $X=[\xi,\eta]$ is uniquely determined by the
  expectation of the convex hull of the origin in $\R^3$ and
  $\{1\}\times[\xi,\eta]\times[\xi,\eta]$. The projections of this
  expected set on the first two coordinates or the first and the third
  coordinates determine the marginal of $\xi$ and $\eta$, while the
  projection on the last two coordinates is the zonoid of
  $(\xi,\eta)$. Note that the lift expectation differs from the lift
  zonoid of $(\xi,\eta)$. 
\end{example}

\section{Numerical example}
\label{sec:numerical-example}

Let $X=[\xi,\eta]$ be a segment on the positive half-line. In
econometrics such segments appear as the data on salary brackets,
given by respondents reluctant to report the exact value of their
salaries. The lift expectation of $X$ is then a convex set in
the plane. The lower boundary of this set coincides with the lower
boundary of the lift zonoid of $\xi$ (also called the generalised
lower Lorenz curve of $\xi$) and the upper boundary stems from the
lift zonoid of $\eta$ (the generalised upper Lorenz curve of $\eta$).

We illustrate the above argument with the US Current Population Survey
2016 freely available from the US Census website. The variable of
interest to us is the personal income variable denoted by
\texttt{ptot\_r} in the dataset. This income variable takes values
from the set $\{1, 2,\dots, 41\}$. Its value $i\in\{1,\dots,40\}$,
means that the personal income of the observed individual lies in the
interval $[a_i,b_i]$, where $b_i-a_i=2499$, and $a_i=b_{i-1}+1$, with
$a_1=0$. We removed from the dataset the individuals with the value
$\texttt{ptot\_r}=41$, meaning that the observed individual earns at
least 100000\$, and so the corresponding interval is unbounded.  Our
final sample includes 132,410 observations.

Figure~\ref{fig:1} shows the estimate for the lift expectation
calculated by taking the mean value of the sets $\{1\}\times[a_i,b_i]$
on the plane. The lower bound (solid black curve) is the Lorenz curve
for the lower bounds of the observed intervals; the upper bound (red
dashed curve) is the upper Lorenz curve for the upper bounds. The (blue
thick) vertical segment with the the $x$-coordinate 1 is the mean
$[27204.40,29450.42]$, obtained by averaging the lower and upper
bounds, see Corollary~\ref{cor:expectation}.

\begin{figure}[htbp]
  \centering
  \includegraphics[scale=0.30]{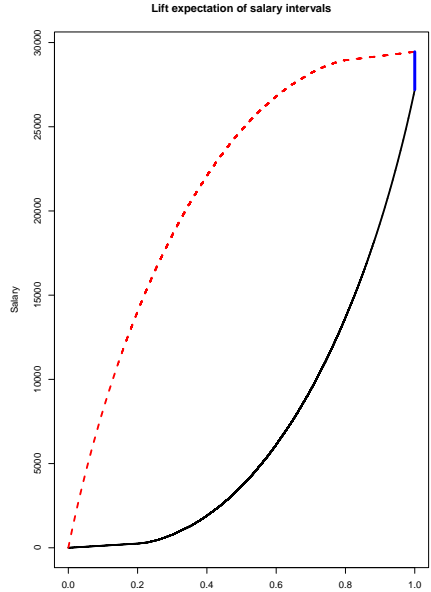} 
  \caption{Estimate of the lift expectation for the interval
    salary data from CPS 2016.
    \label{fig:1}}
\end{figure}

The area of a lift zonoid of a random variable is the Gini mean
difference, which can be used as an inequality index, see,
e.g., \citep{mos02}. In view of this, the area of the lift expectation
provides an upper bound for the Gini inequality index of any random
variable lying in $X=[\xi,\eta]$. In view of this, the area of
$\hat{Z}_X$ may be used as an inequality measure for the case of
interval-valued responses.

\section*{Acknowledgement}

The authors are grateful to the Labex MME-DII and the University
Paris-Saclay (EPEE, Univ-Evry) for hosting a conference where they all
met and initiated this project. The authors are grateful to Ignacio
Cascos for helpful comments and the idea of
Example~\ref{ex:cascos}. GK was supported by the Russian
Science Foundation grant RSF 14-50-00150. IM was supported
by the Swiss National Foundation grants IZ73Z0\_152292 and
200021\_153597.

The authors are grateful to the two anonymous referees and the
associated editor whose comments have led to substantial improvements
of the paper.

\end{document}